\newtheorem{theorem}{Theorem}
\newtheorem{lemma}{Lemma}
\newcommand{\G}{\ensuremath{\Gamma}} % graph
\newcommand{\Gr}{G}                  % group
\begin{document}

\title{There is No McLaughlin Geometry}

\author{Patric R. J. \"Osterg{\aa}rd\\
Department of Communications and Networking\\
Aalto University School of Electrical Engineering\\
P.O.\ Box 13000, 00076 Aalto, Finland\\
{\small \texttt{patric.ostergard@aalto.fi}}
\and 
Leonard H. Soicher\\
School of Mathematical Sciences\\
Queen Mary University of London\\
Mile End Road\\
London E1 4NS, UK\\
{\small \texttt{l.h.soicher@qmul.ac.uk}}
}

\date{}

\maketitle

\begin{abstract}
We determine that there is no partial geometry ${\cal G}$ with parameters
$(s,t,\alpha)=(4,27,2)$.  The existence of such a geometry has been
a challenging open problem of interest to researchers for almost $40$
years.  The particular interest in ${\cal G}$ is due to the fact that
it would have the exceptional McLaughlin graph as its point graph.
Our proof makes extensive use of symmetry and high-performance
distributed computing, and details of our techniques and checks are
provided. One outcome of our work is
to show that a pseudogeometric strongly regular graph achieving equality
in the Krein bound need not be the
point graph of any partial geometry.
\end{abstract}

\bigskip
\noindent 
Keywords:
partial geometry;
pseudogeometric graph;
McLaughlin geometry;
McLaughlin graph;
Krein bound;
backtrack search.
 
\bigskip
\noindent 
MSC Codes: 51E14 (Primary); 05-04 (Secondary).

\newpage

\section{Introduction}

For positive integers $s,t,\alpha$, a \emph{partial geometry} with
\emph{parameters} $(s,t,\alpha)$, or a pg$(s,t,\alpha)$, is
a point-line incidence structure such that
\begin{enumerate}
\item 
there are exactly $s+1$ points on each line;
\item
there are exactly $t+1$ lines through each point;
\item 
every pair of distinct points are on at most one line
(and so every pair of distinct lines meet in at most one point); and
\item 
for every line $L$
and every point $p$ not on $L$, there are exactly $\alpha$ lines through
$p$ meeting $L$. 
\end{enumerate}
Omitting the last condition, we obtain the definition of a 
\emph{partial linear space} with \emph{parameters} $(s,t)$, 
or a pls$(s,t)$. 

Partial geometries, introduced by Bose \cite{B}, 
generalise certain important classes of finite geometries:
generalised quadrangles, 2-$(v,s+1,1)$ designs, nets,
and the duals of these geometries. 
See \cite{DeCM} and \cite{T} for useful surveys on 
partial geometries. 

The \emph{point graph} of a partial linear space ${\cal P}$ 
has the points of ${\cal P}$ as vertices
and has $\{p,q\}$ as an edge if and only if 
the points $p$ and $q$ are distinct and on a common line of ${\cal P}$.
It is well known (and first proved in \cite{B}) that a 
pg$(s,t,\alpha)$ has a strongly regular point graph, with parameters
\begin{equation}
\label{pseugeom}
\left( (s+1)(st+\alpha)/\alpha,\,s(t+1),\,s-1+t(\alpha-1),\,(t+1)\alpha\right).
\end{equation}
A strongly regular graph
with parameters of the form $(\ref{pseugeom})$, 
with $s,t,\alpha\in\mathbb{Z}$,  $1\le s,t$ and $1\le \alpha\le\mathrm{min}\{s+1,t+1\}$, 
is called a \emph{pseudogeometric}
$(s,t,\alpha)$-\emph{graph}, and such a graph is
called \emph{geometric} if it is the point graph of at least one
pg$(s,t,\alpha)$. It is known
that if a pseudogeometric $(s,t,\alpha)$-graph $\Delta$ is the point graph of 
a pls$(s,t)$, then this pls$(s,t)$ must be a pg$(s,t,\alpha)$ and 
$\Delta$ is geometric. This follows from the fact that 
if a pseudogeometric $(s,t,\alpha)$-graph has an $(s+1)$-clique $C$, 
then every vertex not in $C$ is adjacent to exactly $\alpha$ 
vertices in $C$  (see \cite[Lemma~1]{H} and its proof).

It has long been an open problem of considerable interest whether 
there exists a partial geometry ${\cal G}$ with parameters 
$(s,t,\alpha)=(4,27,2)$
(see \cite{CGS,DeCM,v,M97,S}). Such a geometry would
have $275$ points and $1540$ lines, and the point graph $\G$ of 
${\cal G}$ would be a strongly regular graph with parameters
\[(v,k,\lambda,\mu)=(275,112,30,56).\]
Goethals and Seidel \cite{GS} showed that such a strongly regular graph 
is unique (up to isomorphism). Thus $\G$ would be the 
well-studied and fascinating 
McLaughlin graph, defined by J.~McLaughlin \cite{McL}
for the construction of his sporadic simple group, now
called the McLaughlin group $McL$. 
The full automorphism group of $\G$ is $\Gr\cong McL{:}2$,
which acts transitively with permutation rank $3$ on the set of vertices
\cite{McL}. 

As in \cite{S}, we call a pg$(4,27,2)$ a 
\emph{McLaughlin geometry}. 
The main purpose of this paper is finally to settle 
the existence of a McLaughlin geometry, unfortunately in the negative.
We do this by proving that the 
McLaughlin graph is not the point graph of any pls$(4,27)$.  

The McLaughlin graph $\G$ is an example of a pseudogeometric 
$(s,t,\alpha)$-graph achieving equality in the \emph{Krein bound} 
\[
(s + 1- 2\alpha)t \le  (s - 1)(s + 1 - \alpha)^2
\]  
(see \cite[Theorem~7.6]{CGS}).
The problem of the existence of a McLaughlin geometry appears first
to have been posed in \cite{CGS}, and was in connection with   
\cite[Question~7.10]{CGS} asking whether every pseudogeometric
$(s,t,\alpha)$-graph achieving equality in the Krein bound is
geometric. It was proved in \cite[Theorem~7.9]{CGS} that this
is indeed the case when $\alpha=1$, so the McLaughlin graph 
was a particularly important test case. 
See also the Remark and Open Question 
in \cite[p.~442]{DeCM}.
The nonexistence of a McLaughlin geometry now shows that a pseudogeometric 
$(s,t,\alpha)$-graph achieving equality in the Krein bound need not
be geometric when $\alpha>1$. 

% Two partial geometries ${\cal P}$ and ${\cal P'}$ 
% are \emph{isomorphic} if there is
% a bijection from the points of ${\cal P}$ to
% the points of ${\cal P'}$ that maps the lines 
% of ${\cal P}$ (considered as sets of points)
% onto the lines of ${\cal P'}$.  
% An \emph{automorphism} of a partial geometry is an isomorphism
% onto itself.  The set of all automorphisms of a partial geometry 
% $\cal P$ forms a group, the \emph{automorphism group}  
% of $\cal P$.  Since any automorphism 
% of $\cal P$ permutes its points and preserves the property
% of two points being distinct and on a common line, we see that 
% the automorphism group of ${\cal P}$ is a subgroup of the
% (graph) automorphism group of the point graph of $\cal P$.

For brevity, throughout this paper we omit the term \emph{putative}
when writing about putative McLaughlin geometries and their
substructures. 
For our proof, we fix a copy $\G$ 
of the McLaughlin graph. In the Appendix, we give explicit
permutation generators for the automorphism group
$\Gr$ of $\G$, together with the construction of
$\G$ from $\Gr$.
Although we focus on one specific pseudogeometric graph,
the main ideas and techniques discussed
are applicable more generally. 

\section{Candidate lines of a McLaughlin geometry}

Given our copy $\G$ of the McLaughlin graph, the first step
is to determine the candidates for lines of a McLaughlin 
geometry. Each candidate line is a 
$5$-clique in $\G$. There are exactly 15400 5-cliques in $\G$,
which can be readily obtained using Cliquer \cite{NO} or \textsf{GRAPE} \cite{GRAPE}
(see, for example, the Appendix).
We denote the set of these 15400 cliques (or \emph{candidate lines}, or simply \emph{lines})  
by ${\cal C}$.

We now give some basic properties of these cliques, which
can also be easily checked computationally.

\begin{lemma}
\label{lem:cl}
\begin{enumerate}
\item Each vertex in\/ $\G$ is in exactly \/ $280$ of the cliques in\/ ${\cal C}$.
\item Each edge in\/ $\G$ is contained in exactly \/ $10$ of the cliques in\/ ${\cal C}$.
\item Each\/ $k$-clique in\/ $\G$ with\/ $3 \leq k \leq 5$ is contained in exactly
one of the cliques in\/ ${\cal C}$.
\item There are no\/ $k$-cliques in\/ $\G$ of size\/ $k>5$.
\end{enumerate}
\end{lemma}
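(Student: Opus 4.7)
The plan is to exploit the $\Gr\cong McL{:}2$ symmetry of $\G$ and its strong regularity to reduce each claim to a short counting or bound argument, with any remaining pointwise uniqueness either deduced from the well-known local structure of $\G$ or checked directly against $\mathcal{C}$.

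For item (1), since $\Gr$ acts transitively on the $275$ vertices of $\G$, each vertex lies in the same number of cliques from $\mathcal{C}$, and double counting vertex--clique incidences gives $5\cdot 15400/275=280$. For item (2), the rank-3 action of $\Gr$ is transitive on the $275\cdot 112/2=15400$ edges of $\G$, so by the same averaging each edge lies in $\binom{5}{2}\cdot 15400/15400=10$ cliques. Item (4) follows from the Hoffman (Delsarte) bound on the clique number: the least eigenvalue of $\G$ is $-28$ (computed from $(v,k,\lambda,\mu)=(275,112,30,56)$), hence $\omega(\G)\le 1+112/28=5$.

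For item (3) the case $k=5$ is trivial. For $k=3$ I fix an edge $e$. Strong regularity gives that $e$ lies in exactly $\lambda=30$ triangles, while item (2) says that the $10$ cliques of $\mathcal{C}$ through $e$ contribute $10\cdot 3=30$ incidences with those triangles; so on average each triangle through $e$ lies in exactly one clique of $\mathcal{C}$. To sharpen ``average one'' into ``exactly one'' I would examine the induced $\lambda$-subgraph on $N(u)\cap N(v)$ for an edge $\{u,v\}$: the first subconstituent of $\G$ is strongly regular with parameters $(112,30,2,10)$, so this $\lambda$-subgraph is $2$-regular on $30$ vertices and hence a disjoint union of cycles, and the number of its triangles must equal the $10$ cliques of $\mathcal{C}$ through $\{u,v\}$, forcing a decomposition into $10$ disjoint triangles. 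Each such triangle extends $\{u,v\}$ to a single clique of $\mathcal{C}$, so every triangle of $\G$ lies in exactly one clique of $\mathcal{C}$. The case $k=4$ is then immediate: any $4$-clique $\{a,b,c,d\}$ contains the triangle $\{a,b,c\}$ and so can only lie in the unique $C\in\mathcal{C}$ containing that triangle, while $d$, being a common neighbour of $a,b,c$, is forced to be one of the two extra vertices of $C$.

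The main obstacle is the upgrade from ``average one'' to ``exactly one'' in the triangle count, which is not forced by the SRG parameters alone and really uses the finer fact that the first subconstituent of $\G$ is itself strongly regular with $\lambda'=2$. The cleanest and most self-contained route is simply to enumerate $\mathcal{C}$ with Cliquer or \textsf{GRAPE} as in the Appendix and verify each numerical statement by a direct tally on the explicit list of $15400$ cliques, which also provides an immediate sanity check on (1), (2) and (4).
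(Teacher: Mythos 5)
Your proof is correct, and for items (1) and (2) it coincides with the paper's (transitivity of $\Gr$ on vertices and edges plus double counting). For items (3) and (4), however, you take a genuinely different route. The paper handles both via the pseudogeometric property quoted in its introduction --- every vertex not in a $5$-clique of a pseudogeometric $(4,27,2)$-graph is adjacent to exactly $\alpha=2$ vertices of that clique --- which immediately kills cliques of size $>5$ and gives uniqueness of the $5$-clique over a $3$-clique; for \emph{existence} of that $5$-clique it must cite van Lint's result. You instead use the fact that the first subconstituent of $\G$ is an SRG$(112,30,2,10)$, so the common neighbourhood of an edge induces a $2$-regular graph on $30$ vertices; since distinct triangles in a $2$-regular graph are vertex-disjoint, the $10$ cliques of ${\cal C}$ through the edge force this graph to be exactly $10$ disjoint triangles, which yields existence and uniqueness for $k=3$ in one stroke (no appeal to van Lint) and makes $k=4$ immediate. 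Your Hoffman--Delsarte bound $\omega\le 1+112/28=5$ for item (4) is also a clean, correct alternative (and in fact item (4) already follows from your triangle-partition argument, since a $6$-clique would give three common neighbours of a triangle where only two are available). The trade-off is the external input: the paper leans on Haemers' lemma, you lean on the strongly regular subconstituent --- itself a consequence of the Cameron--Goethals--Seidel theorem on Krein-bound equality that the paper invokes elsewhere --- so both are of comparable weight, and your closing remark that everything can be verified by direct tally on the explicit list of $15400$ cliques matches the paper's own observation that the lemma ``can also be easily checked computationally.''
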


\begin{proof}
The group $\Gr$ acts transitively on both the vertices and
the edges of $\G$. 
Consequently, each of the $275\cdot 112/2 = 15400$ edges
of $\G$ occur in $15400\cdot 10/15400 = 10$ of the cliques
in ${\cal C}$, and each of the 275 vertices of $\G$ occur in
$15400\cdot 5/275 = 280$ of the cliques in ${\cal C}$.

Since $\G$ is a pseudogeometric $(4,27,2)$-graph, 
each vertex not in a given 5-clique is adjacent to exactly two vertices
in that clique.
It is shown in \cite{v} that each 3-clique is contained in some 5-clique
(or see the Appendix).
This 5-clique $C$ is unique, since otherwise
there would be a vertex outside $C$ that is adjacent to at least
three vertices of $C$, which is not possible.
This implies that also each 4-clique is contained in a unique 5-clique and that
there can be no clique in $\G$ of size greater than $5$. 
\end{proof}

The existence problem for a pg$(4,27,2)$ boils down to 
the problem of finding a subset of size 1540 of the set ${\cal C}$
of 5-cliques such that each edge of the McLaughlin graph
is in exactly one of the chosen 5-cliques. For this, the framework of
exact cover \cite{KP,K} could be considered, and indeed this approach 
is used in \cite{M97}.  However, many further ideas are needed to solve
the existence problem of a McLaughlin geometry, 
even with the large available computing resources.
Indeed, it is clear from earlier studies \cite{v,M97,S} that constructing a McLaughlin 
geometry, or proving nonexistence, is a big challenge. As with other
similar challenging problems \cite{KO}, care has to be taken to
find an efficient approach that utilizes the available symmetries.

\section{The general approach}

Our general strategy follows that described by 
Reichard \cite{R}. This strategy was independently devised 
by the second author, and implemented
in his \textsf{GRAPE} package \cite{GRAPE} function \texttt{Partial\-Linear\-Spaces},
which can classify partial linear spaces, including partial geometries, 
with given parameters and point graph, up to isomorphism. 
In the present work, however, we needed to employ new ideas and enhancements, 
implemented on a powerful 
256-core computer cluster, to be able finally to solve the existence 
problem of a McLaughlin geometry. 

We first define the concept of a bundle. If $p$ is a point
in a McLaughlin geometry, then the 28 lines through $p$ 
meet pairwise only in $p$, for otherwise, there would
be two distinct points on more than one line. Where $p$ is
a vertex in the  McLaughlin graph
$\G$ (we also call $p$ a \emph{point}), then a \emph{bundle} through $p$ is a set 
of 28 5-cliques whose pairwise intersection is $\{p\}$. 

Our basic approach for an exhaustive search for a McLaughlin
geometry is to consider the points $1,2,\ldots,275$ of $\G$, and
given chosen bundles through \mbox{$i-1$} chosen points, $p_1,\ldots,p_{i-1}$,
to choose a new point $p_i$ (which may depend on the points and bundles chosen so far), 
then consider all the possible bundles 
through $p_i$.  Some bundles through $p_i$ may be incompatible with previous choices, 
or may not be necessary to consider by symmetry or since they were eliminated 
from consideration in a previous case, 
but for each bundle through $p_i$ that it is necessary to consider, we
continue the search using a new point $p_{i+1}$  
not in $\{p_1,\ldots,p_i\}$. The full search starts with $i=1$. 

\subsection{Classifying bundles}

Since the group $\Gr$ acts transitively on the points, the
first point $p_1$ can be chosen arbitrarily, so we choose $p_1=1$,
and start by classifying the bundles through this point, up to the action
of the stabilizer $U$ in $\Gr$ of $p_1$. 
This stabilizer has shape $U_4(3).2$ \cite{McL} and order 
$|\Gr|/275 = 2^8\cdot 3^6 \cdot 5 \cdot 7 = 6531840$.

There are exactly 17729280 bundles through $p_1$, forming just 36 $U$-orbits. 
This was already discovered
by Pech and Reichard \cite[Sect.\ 8.2]{PR}, but no further
details about the structures were given in that study. 

We determined the 36 $U$-orbits of the 
17729280 distinct bundles through $p_1$ using exact 
cover \cite{KP}: each 5-clique that contains
$p_1$ covers the four other
points on the candidate line, and we need to 
determine the sets (of size 28) of such 5-cliques that cover 
each the 112 vertices adjacent to $p_1$ exactly once.
The Orbit--Stabilizer theorem was applied for validating the results; cf.\
\cite[Sect.\ 10.2]{KO}. More precisely,
it was checked that the number of bundles in each orbit 
determined during the search is $6531840/|A|$, where $|A|$ is the
order of the stabilizer in $U$ of an orbit representative.  

We remark that the bundles through $p_1$ correspond
to the ``spreads" of the pg$(3,9,1)$ (or generalised quadrangle GQ$(3,9)$), whose 
point graph is the induced subgraph $\Delta$ on the neighbors in $\G$ of $p_1$, and that 
these spreads form just $26$ orbits under the action of the
automorphism group of $\Delta$, of shape $U_4(3).D_8$  (see \cite{B4} 
 and \cite[Sect.\ 8.2]{PR}). 

We use the 36 \emph{lex-min}
$U$-orbit representatives of the bundles in subsequent computations. This is
an important detail, since, for example, references to specific points
are not valid for arbitrary representatives. The natural order $1,2,\ldots,275$ of the points
naturally induces a lexicographic order on the candidate lines (which are sets of points),
and so also on the sets of candidate lines. 
The lex-min $U$-orbit representatives of the bundles through
$p_1$ were computed independently by both authors.
The second author used \textsf{GRAPE} \cite{GRAPE}, making
particular use of  its included
function \texttt{SmallestImageSet} \cite{L04}, written by Steve Linton,
which, given a permutation group $A\le S_n$ and a
subset $S$ of $\{1,\ldots,n\}$, determines the lex-min set in the 
$A$-orbit of $S$ (without explicitly constructing this orbit). 

To find the automorphism groups of graphs, we use \emph{nauty} \cite{MP} in the 
current work. Isomorphism and automorphism problems for incidence 
structures encountered (such as sets of lines) are also conveniently 
handled by \emph{nauty} after standard reductions. Furthermore, the 
\texttt{SmallestImageSet} function in \textsf{GRAPE} is very useful
for testing isomorphism when isomorphism corresponds to
two sets being in the same group orbit. 

Note that, due to the transitivity of $\Gr$ on the points,
our 36 lex-min $U$-orbit representatives form a set of
representatives for the $\Gr$-orbits of all the bundles through all the points.
We consider two bundles to be \emph{isomorphic} if they are in the same 
$\Gr$-orbit. 
In the subsequent computations, it will be important to be able to
distinguish between the 36 isomorphism classes of bundles 
with some invariant that can be
computed reasonably quickly. Brouwer \cite{B4} describes
the following invariant, which partitions the bundles into
11 classes.

Count the number of 4-sets $F$ of lines in the bundle through $p$ that 
satisfy the following property: there are four additional candidate lines 
through $p$ that intersect each line in $F$ in some point other than $p$. 
It follows that these four additional
candidate lines must pairwise intersect only in $p$, for otherwise,
we would have a triangle in the generalised quadrangle whose
point graph is induced on the neighbors in $\G$ of $p$. 
Hence, with $p$ removed, the eight lines consisting of
a 4-subset $F$ of the bundle together with four candidate
lines meeting each element of $F$ in a point form a $4\times 4$ ``grid".
The value
of this invariant is between 0 and 63 for the 36 isomorphism
classes. (This subdivision was used also by Reichard.)

We shall now consider a new invariant that is able to distinguish
between all 36 isomorphism classes of bundles. 
Let $p$ be a point and $P$ be the 
set of points that are not adjacent in $\G$ to $p$. Given a bundle through $p$
consider the set of candidate lines $L$ that do not contain
$p$ but intersect two points of some line in the bundle through
$p$. The number of such candidate lines is obviously 
$28\cdot 6 \cdot 9 = 1512$. Each candidate line in $L$ covers three pairs
of points in $P$. Moreover, each pair of points in $P$ is covered
by 0 to 10 candidate lines in $L$. 

The values $t_1,\ldots,t_{10}$, where $t_i$ is the number of pairs of points 
in $P$ covered exactly $i$ times are now invariants. 
Clearly $\sum_{i=1}^{10} it_i = 1512\cdot 3 = 4536$. 
The tuple $(t_1,t_2,\ldots ,t_{10})$ turns out to be an almost complete 
invariant --- actually, it will suffice to use only a part of the tuple,
for example, $(t_1,t_2)$ --- leaving only two pairs of isomorphism classes open.

To make the invariant complete one may further have a look at
the candidate lines $L'$ that intersect exactly two of the lines in the bundle in one
point each. 
Let $L'' \subseteq L'$ be the subset of lines that
do not intersect any of the sets in 
$\{C\cap P : C\in L\}$  in more than one point.
Finally, find a line in the bundle that intersects the fewest number $s$ of
lines in $L''$, and use $s$ in the invariant.

The following invariants of the 36 isomorphism classes are listed in 
Table~\ref{tab:bundle}: the old invariant (Old), the values
$s$, $t_1$,\ldots, $t_{10}$ for the new invariant, and
the order $|\mathrm{Stab}|$ of the stabilizer in $\Gr$ of 
a representative bundle.

\begin{table}
\caption{Isomorphism classes of bundles}
\label{tab:bundle}
\begin{center}
\begin{tabular}{rrrrrrrrrrrrrr}\hline
Nr&Old &$s$&$t_1$&$t_2$&$t_3$&$t_4$&$t_5$&$t_6$&$t_7$&$t_8$&$t_9$&$t_{10}$& $|\mathrm{Stab}|$\\\hline
1 &0   &64& 1730&762&288&97&6&0&0&0&0&0 &6\\
2 &0   &80& 1603&840&336&35&21&0&0&0&0&0&7\\
3 &0   &80& 1568&910&266&70&14&0&0&0&0&0&14\\
4 &0   &84& 1568&910&266&70&14&0&0&0&0&0&14\\
5 &0   &68& 1673&875&231&70&28&0&0&0&0&0&14\\
6 &0   &76& 1736&770&301&70&7&7&0&0&0&0 &14\\
7 &0   &70& 1686&981&117&93&18&9&3&0&0&0&18\\
8 &0   &78& 1626&837&360&15&0&9&6&0&0&0 &18\\
9 &0   &54& 1800&810&207&108&0&0&9&0&0&0&18\\
10&0   &64& 1640&873&270&60&18&0&0&0&0&1&18\\
11&0   &82& 1589&903&231&112&0&0&0&0&0&0&21\\
12&0   &84& 1589&903&231&112&0&0&0&0&0&0&21\\
13&1   &66& 1728&852&208&102&0&12&0&0&0&0&16\\
14&1   &50& 1624&994&184&32&32&14&0&0&0&0&16\\
15&1   &73& 1792&752&272&92&0&8&0&1&0&0  &16\\
16&1   &52& 1824&739&264&92&8&4&0&0&0&1  &16\\
17&1   &73& 1824&840&216&30&24&24&0&0&0&0&48\\
18&1   &27& 1896&780&168&120&0&12&0&3&0&0&48\\
19&2   &70& 1708&832&264&57&26&0&2&0&0&0&2\\
20&3   &73& 1648&855&285&51&21&0&2&0&0&0 &3\\
21&3   &59& 1700&870&240&64&12&6&0&3&0&0 &24\\
22&3   &50& 1764&831&240&62&12&12&0&0&0&1&24\\
23&4   &60& 1773&726&315&72&12&3&0&0&0&0&6\\
24&6   &60& 1717&789&309&47&18&6&0&0&0&0&6\\
25&7   &82& 1520&1008&232&52&8&4&0&4&0&0 &16\\
26&7   &68& 1664&876&272&40&16&8&0&2&0&0 &32\\
27&7   &60& 1664&994&144&80&16&0&0&4&0&2 &32\\
28&7   &89& 1368&1080&264&36&0&12&0&0&0&0&48\\
29&9   &0 & 1584&1242&0&36&0&54&0&0&0&0&432\\
30&9   &38& 1968&837&0&216&0&0&0&0&0&3 &432\\
31&15  &64& 1728&808&280&60&8&12&0&0&0&0&16\\
32&15  &48& 1704&792&312&60&0&12&0&0&0&0&48\\
33&15  &78& 1664&908&224&76&0&8&0&4&0&0 &64\\
34&15  &72& 1792&948&96&112&0&12&0&0&0&4&192\\
35&31  &64& 1824&792&192&120&0&12&0&0&0&0&192\\
36&63  &0 & 2016&756&0&252&0&0&0&0&0&0&12096\\\hline
\end{tabular}
\end{center}
\end{table}

Table~\ref{tab:bundle} shows that six of the 36 representative bundles have 
$t_{10}>0$. Hence, for each of these six bundles, 
there is a pair of points which does
not occur in any feasible candidate line (as this pair is 
only covered by candidate lines which cannot be lines in 
a McLaughlin geometry containing that bundle). 
The fact that these six cases
cannot lead to a McLaughlin geometry was discovered
already by Reichard 
(as noted in \cite[p.~252]{S}), but the specific 
details were never published \cite{R2}.

We remark that, 
given a bundle, a $113\times 1540$ part of the
$275\times 1540$ point-line incidence matrix of a McLaughlin geometry 
${\cal G}$ is determined.
We can see this making use of the fact that every point in 
$\G$ not on a given candidate line $C\in {\cal C}$ must be adjacent to exactly two
points in $C$. This implies that if $C_1$ and $C_2$ are distinct candidate lines in 
our bundle with common point $p$, 
and $q\not=p$ is a point in $C_1$, then
$q$ is adjacent to just one point other than
$p$ in $C_2$, and this yields a total of $1512=\binom{28}{2}4$
edges to be covered once each by the lines of 
${\cal G}$ not in our bundle.  
Consequently, the points of the 
bundle induce a partial linear space having 
28 lines of size~5 and 1512 lines of size~2. 

\subsection{Classifying pairs of bundles}
\label{sect:pairs}

Now, for each of the 30 lex-min representative bundles $B$ through $p_1=1$ 
that have $t_{10}=0$,
we will choose a new point $p_2$ and determine all bundles through $p_2$ 
that are compatible with $B$, up to the action of the stabilizer of $p_2$ 
in the $\Gr$-stabilizer of $B$. 
Two bundles are said to be \emph{compatible}
if no line of the first intersects any line of the second in exactly two points.

The order in which the points are completed, and in particular
the choice of $p_2$ for each chosen bundle through $p_1$, are crucial.
We shall now have a
closer look at the cases when $p_1$ and $p_2$ are adjacent and 
nonadjacent.

When $p_1$ and $p_2$ are adjacent, their completion has a
line $\ell = \{p_1,p_2,a,b,c\}$. Given the bundle through
$p_1$, out of the 280 candidate lines through $p_2$,
the infeasible lines through $p_2$ are exactly those lines 
that contain $p_1$ and one other point of $\ell$. This gives a 
total of $280-(1+4\cdot 9) = 243$ feasible lines.

One may also see that the bundle through $p_1$ reduces the number 
of possible bundles through $p_2$ from 17729280 to 1772928, that is, 
by a factor of exactly 10.
This follows from the fact that given $p_1$, out of the ten 
possibilities for the line through $p_1$ and $p_2$, one has been 
fixed. Since the $\Gr$-stabilizer of $p_2$ is transitive on the 
set of $5$-cliques containing that point, 
and the bundle through $p_1$ does 
not restrict any candidate lines through $p_2$ that intersect $\ell$ 
only in that point, the reduction factor is indeed 10.

If $p_1$ and $p_2$ are nonadjacent, then the vertices in $\G$ 
adjacent to both points induce the Gewirtz graph, which is the unique
(56,10,0,2)-srg (see \cite{BH}). Every candidate line through $p_1$ and $p_2$
intersects the vertex set of that graph in exactly 2 vertices,
and a bundle partitions it into 28 pairs. Consequently, given
a bundle through $p_1$, 
there are $280-28 = 252$ feasible lines for a bundle through
$p_2$. (Recall that a 3-clique is in a unique candidate line by
Lemma~\ref{lem:cl}.)

Although the number of feasible lines through
$p_2$ is smaller when $p_1$ and $p_2$ are adjacent than
when they are nonadjacent, it turns out that the number of compatible
bundles is smaller in practice when they are nonadjacent. We do not
have a formal argument for this, but the following probabilistic
approach gives a result surprisingly close to experimental results.

The bundles through $p_1$ and $p_2$ each cover 28 edges out of the
280 edges in the Gewirtz graph. The probability that two sets of
28 randomly chosen edges are distinct is 

\[\frac{\binom{252}{28}}{\binom{280}{28}} 
% = \frac{252\cdot 251 \cdot \cdots \cdot 225}{280\cdot 279 \cdot \cdots \cdot 253} 
\approx 0.04454.
\]

This value is smaller than $1/10 = 0.1$. Although this reasoning
cannot be directly applied to the case of more than two completed
points, practical experiments show that independent sets are indeed
a proper choice when extending larger set of points. 

When choosing the second point $p_2$ to complete amongst those vertices 
(other than $p_1$) that
are not adjacent to $p_1$, the possible choices correspond 
to the orbits of these 
points under the stabilizer in $\Gr$ of the given bundle $B$
through $p_1$. Isomorph rejection will be carried out on the 
compatible bundles through the chosen $p_2$, 
using the action of the stabilizer $K$ 
of $p_2$ in the $\Gr$-stabilizer of $B$. 

Let $|\mathrm{Stab}|$ be the order of the stabilizer in $\Gr$ of the 
bundle $B$ through $p_1 = 1$, and let $l$ be the length of that group's
point-orbit containing $p_2$. Then the group $K$ above 
will have order $|\mathrm{Stab}|/l$. Let $N$ be the number of
bundles through $p_2$ that are compatible with the first bundle $B$.
It turns out that most sets of two bundles have only trivial 
symmetries, so the number of solutions up to symmetry is approximately
$N_1 := Nl/|\mathrm{Stab}|$. The point $p_2$ is chosen so that $N_1$
is minimized. The exact number of solutions up to symmetry is
denoted by $N_2$. The Orbit--Stabilizer theorem is used also here
to validate the results. For each of the 30 lex-min isomorphism class 
representative bundles through $p_1$
which were not rejected in the first step,
the compatible bundles through the chosen
$p_2$ were classified and their lex-min $K$-orbit representatives 
were computed by both authors independently, 
using different software and computers,
the second author using \textsf{GRAPE}
and \texttt{SmallestImageSet} on a Linux desktop PC. 
The results were compared and found to be exactly the same. 

In Table \ref{tab:point2} we now list information about each 
case and chosen point $p_2$, ignoring the six cases that were rejected 
in the first step. The notation given above is used for the 
columns of the table. The last two columns, $N_3$ and $H$, 
will be discussed in the next subsection.

\begin{table}
\caption{Choices for second point}
\label{tab:point2}
\begin{center}
\begin{tabular}{rrrrrrrrr}\hline
Nr& $|\mathrm{Stab}|$ & $p_2$ & $N$ & $l$ & $N_1$ & $N_2$ & $N_3$ & $H$\\\hline
1 &6 & 68  & 940320 & 1 & 156720   & 157760 &   72811 & 946560  \\
2 &7 &161  & 750620 & 1 & 107231   & 107246 &   74493 & 750722  \\
3 &14& 16  &1072194 & 1 & 76585    & 77641  &   25505 & 1086974 \\
4 &14& 79  &1055464 & 1 & 75390    & 76474  &   26850 & 1070636 \\
5 &14& 87  &1041676 & 1 & 74405    & 75034  &   27941 & 1050476 \\
6 &14&159  &1078178 & 1 & 77013    & 78153  &   23269 & 1094142 \\
7 &18&136  &1159506 & 1 & 64417    & 65666  &   17619 & 1181988 \\
8 &18& 69  &1032450 & 3 & 172075   & 172266 &   10186 & 3100788 \\
9 &18& 56  &1089000 & 3 & 181500   & 181742 &    7044 & 3271356 \\
10\\                                                     	 
11&21& 27  &1356738 & 1 & 64607    & 64610  &   15233 & 1356810 \\
12&21& 79  &1356738 & 1 & 64607    & 64610  &   14147 & 1356810 \\
13&16&106  & 431840 & 1 & 26990    & 27412  &   25365 & 438592  \\
14&16&172  & 583384 & 1 & 36461    & 37204  &   33452 & 595264  \\
15&16&118  & 504528 & 2 & 63066    & 63066  &   24992 & 1009056 \\
16\\                                                     	 
17&48& 18  & 851600 & 6 & 106450   & 106450 &     846 & 5109600 \\
18&48&126  & 565584 & 3 & 35349    & 35578  &    6749 & 1707744 \\
19&2 &220  & 894832 & 1 & 447416   & 448417 &  288705 & 896834  \\
20&3 &180  & 600438 & 1 & 200146   & 200146 &  176116 & 600438  \\
21&24&190  &1209360 & 1 & 50390    & 50718  &   12620 & 1217232 \\
22\\                                                     	 
23&6 & 68  & 776563 & 3 & 388282   & 388842 &   52122 & 2333052 \\
24&6 &240  & 714792 & 1 & 119132   & 120096 &   90940 & 720576  \\
25&16& 16  &1053336 & 2 & 131667   & 132016 &   20285 & 2112256 \\
26&32&  3  & 402504 & 4 & 50313    & 50313  &   10310 & 1610016 \\
27\\                                                     	 
28&48& 37  & 571676 & 6 & 71459    & 71683  &    1348 & 3440784 \\
29&432& 8  &1048560 &36 & 87380    & 87778  &      67 & 37920096\\
30\\                                                     	 
31&16& 42  & 457198 & 4 & 114300   & 114670 &   20561 & 1834720 \\
32&48& 16  & 485100 & 6 & 60637    & 60746  &    4104 & 2915808 \\
33&64& 73  &1216976 & 2 & 38030    & 38207  &    2705 & 2445248 \\
34\\                                                     	 
35&192& 38 & 402712 & 12& 25169    & 25441  &     250 & 4884672 \\
36&12096&8 & 396552 & 36& 1180     & 1231   &    1160 & 14890176\\\hline
\end{tabular}
\end{center}
\end{table}

\subsection{Completing the search}
\label{sect:backtrack}

We have already seen that six types of bundles need not be
considered at all in the computer search, splitting the overall
search into 30 cases, with one case for each of the 
remaining bundle types, where we shall be searching for 
a McLaughlin geometry containing a bundle of that type.  
It is clear that after a case for a given
bundle type is completed, that bundle type may be \emph{cancelled},
that is, eliminated
from consideration in the cases of the search to follow.  
A bundle of a cancelled type can be recognised via our bundle
isomorphism class invariant. 
The order in which the cases are considered will have an
impact on the overall running time.

The value of $N_2$ in Table~\ref{tab:point2} gives a rough
estimate of the time it takes to complete the cases 
(ignoring possible cancelled cases). It further turns out
that the impact of a cancelled bundle type on the subsequent cases is
roughly inversely proportional to $|\mathrm{Stab}|$. 
Consequently, a reasonable heuristic for ordering the
parts is via the value $H:=N_2|\mathrm{Stab}|$; this value 
is given in the last column of Table~\ref{tab:point2}.
(Earlier approaches of using invariants and splitting up the search
in many parts are, for example, \cite{HOBE,KO0}.)

We define an order for the parts based on $H$, but put case 36 
early since that is a very small case and good for testing.
The order is as follows (including the six cases that need
not be considered at all at this stage): 
\[
\begin{array}{l}
10\rightarrow 16\rightarrow 22\rightarrow 27\rightarrow 
30\rightarrow 34\rightarrow 36\rightarrow 13\rightarrow 
14\rightarrow 20\rightarrow 24\rightarrow 2\rightarrow\\
19\rightarrow 1\rightarrow 15\rightarrow 5\rightarrow 
4\rightarrow 3\rightarrow 6\rightarrow 7\rightarrow 
21\rightarrow 11\rightarrow 12\rightarrow 26\rightarrow\\
18\rightarrow 31\rightarrow 25\rightarrow 23\rightarrow 
33\rightarrow 32\rightarrow 8\rightarrow 9\rightarrow 
28\rightarrow 35\rightarrow 17\rightarrow 29
\end{array}
\]

Having decided the order, the structures enumerated under
$N_2$ in Table~\ref{tab:point2} can immediately be reduced
by removing the bundles whose types will be cancelled
before a given case is handled. 
The numbers of remaining structures, which are seeds for the
final stage, are given in column $N_3$ of the same table.

The idea of considering independent sets of points to complete 
turned out to have further benefits when programming 
the search. For various reasons related to speed and memory requirement, 
we want to predetermine a set of points to consider in the search. Here
we obviously rely on the assumption that it suffices to consider a
relatively small set of points to prove nonexistence. Results by
Reichard \cite[Sect.\ 5.3]{R} give an indication that this is indeed the case.

There are many independent sets in $\G$, so the choice of points is 
not obvious. We use the following approach.
We first find the set $S$ of points other than $p_1$ and $p_2$  
that are neither adjacent to $p_1$
nor to $p_2$. From the parameters of $\G$, we know that $|S| = 105$.
Let $B(p)$ be the number of bundles through $p$
that are compatible with both of the two bundles
through $p_1$ and $p_2$. For a given $n$, we now find an 
independent set $I \subseteq S$, $|I|=n$ that minimizes 
$\sum_{p\in I}B(p)$. We use $n=9$. (Since all pairs of nonadjacent
vertices are in one orbit of the automorphism group of $\G$, 
the number of independent sets $I$ of a given size to consider will not
depend on the partial solution; for $n=9$, it is
18579960.) To get an upper bound on the size of the data
structures needed, one may further require that $\max_{p\in I}B(p)<u$, for 
some constant $u$,
when minimizing $\sum_{p\in I}B(p)$. We here use this additional
condition with $u=10^5$.

%Amongst several independent sets leading to the
%same minimum, we choose the lex-min solution.

The core of the search is basic backtrack search.
Throughout the search, the candidate bundles through each of
the considered points are maintained. On each level of the
search, the point to consider is first determined.
Specifically, a point with the fewest candidates is chosen.
All candidate bundles for that point are then considered in order. 
For each fixed bundle, the candidate sets for the other points 
are reduced and the procedure described here is recursively repeated.

The core task of the backtrack search is to determine whether
two bundles are compatible. At one 
extreme, one could preprocess every pair of candidate bundles
and tabulate one bit for the result (this is essentially about
determining the edges of a graph in which a clique search is 
to be carried out). However, with, say, $300000$ candidates in 
total, the task of determining a matrix with 
$(3\cdot 10^5)^2 = 9\cdot 10^{10}$ entries is time-consuming and,
in particular, a substantial amount of memory is required.

At the other extreme, one may process each pair of bundles
as they appear in the search. This requires that up to
$28\cdot 28 = 784$ pairs of lines be compared, which is a
rather large constant. Actually, this approach was taken when
carrying out a partial check of the results with an independent
program. Between these extremes, there are
several other possibilities, out of which the following
was chosen.

Assume that we compare a bundle through point $p$ with
a bundle through a nonadjacent point $p'$. Now consider the set $P$ 
of points that will lie on lines through $p$ as well
as lines through $p'$.
The size of $P$ equals the parameter $\mu = 56$ for $\G$. 
We have seen earlier that $P$ induces a 
$(56,10,0,2)$-srg in $\G$,  the 
Gewirtz graph, which has $56\cdot 10/2 = 280$ edges.
The 28 lines through $p$ and $p'$ will each contain 
exactly 28 of the 280 edges. We now have the information
needed for the data structure.

Let $I$ be the (independent) set of points to be completed in the
search. For each pair $(B,p)$, where $B$ is a candidate
bundle through $p'$, such that $p',p \in I$ and $p'\not=p$, determine
and save a 280-bit vector with 28 1s indicating which pairs
of points of lines in $B$ are in the common neighborhood of $p'$ and
$p$. When comparing two bundles, it now suffices to compare
two 280-bit vectors. This can be implemented by checking
whether five AND operations on 64-bit integers all
produce 0.  This very fast method of determining compatibility of
bundles was crucial to the success of our search. 

\section{Results}

The backtrack search described in Section~\ref{sect:backtrack}
was carried out on a 256-core cluster with 2.4-GHz
Intel Xeon E5-2665 processors. As the $n=9$ points could not
be completed in any of the cases, we have the following result.

\begin{theorem}
There is no partial geometry with parameters $(s,t,\alpha)=(4,27,2)$, 
that is, there is no McLaughlin geometry.
\end{theorem}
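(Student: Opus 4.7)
My plan is to convert the existence question into a constrained covering problem over the $15400$ candidate lines of $\G$ (the $5$-cliques of Lemma~\ref{lem:cl}) and then exhaust the possibilities by a heavily symmetry-reduced backtrack. A McLaughlin geometry corresponds to a selection of $1540$ candidate lines covering each edge of $\G$ exactly once. Rather than treating this monolithically as an exact cover, I would build the geometry one point at a time by committing, at each chosen point $p$, to a \emph{bundle}: the set of $28$ lines through $p$, which necessarily pairwise intersect only in $p$.

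First I would fix $p_1 = 1$ and classify all bundles through $p_1$ up to the action of its stabilizer $U \leq \Gr$. Bundles correspond to partitions of the $112$ neighbors of $p_1$ into the quadruples cut out by candidate lines through $p_1$, which I would enumerate by exact cover; the orbit--stabilizer theorem on the resulting orbits audits the count. To distinguish the $36$ orbits quickly during later stages I would need an efficient, complete invariant, which I would build by counting, for each bundle, how the $1512$ candidate lines meeting it in two points cover pairs of non-neighbors of $p_1$. A first gain is that for some orbits there exists a pair of non-neighbors covered only by candidate lines that cannot coexist with the bundle; any such bundle can be rejected outright, eliminating six of the $36$ orbits and leaving $30$ cases.

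Next, for each of the surviving $30$ bundles through $p_1$, I would choose a second point $p_2$ and classify all bundles through $p_2$ that are compatible with the first (meaning no line of one meets a line of the other in two points) up to the pair-stabilizer. A crucial heuristic is to take $p_2$ nonadjacent to $p_1$: a short probabilistic calculation on the induced Gewirtz graph of their common neighborhood suggests, and experiments confirm, that the fraction of compatible bundles through $p_2$ is smaller than when $p_2$ is adjacent to $p_1$. Dual implementations, orbit--stabilizer checks, and the systematic use of lex-min orbit representatives validate the enumeration in each of the $30$ cases.

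The main obstacle is executing the final extension to further points within a feasible running time. My plan is: (i) precompute an independent set $I$ of $n=9$ points inside the $105$-point set $S$ of common non-neighbors of $p_1$ and $p_2$, chosen so as to minimise the total count $\sum_{p\in I} B(p)$ of bundles through each $p\in I$ compatible with the first two, subject to a per-point cap $u = 10^5$ on $B(p)$; (ii) order the $30$ cases by the heuristic $H := N_2 |\mathrm{Stab}|$ so that bundle-isomorphism classes cancelled by earlier cases, detected via the complete invariant above, prune later cases efficiently; and (iii) implement the compatibility test between two bundles by a precomputed $280$-bit vector giving, for each candidate bundle through $p \in I$, which edges of the Gewirtz graph on the common neighbors of $p$ and a partner $p' \in I$ are used, so that compatibility reduces to five $64$-bit AND tests yielding zero. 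The backtrack then attempts, for each of the $30$ cases, to extend the two chosen bundles to compatible bundles at all $9$ points of $I$; if every case exhausts without success, no McLaughlin geometry exists. Correctness throughout is underwritten by orbit--stabilizer audits and by a slower independent implementation that tests compatibility pair-of-lines by pair-of-lines.
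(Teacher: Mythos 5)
Your proposal reproduces the paper's argument essentially step for step: the same reduction to selecting $1540$ of the $15400$ candidate lines, the same bundle-by-bundle construction with the $36$ orbits through $p_1$, the rejection of six of them via the pair-covering invariant, the classification of compatible bundles through a nonadjacent $p_2$, the independent set of $n=9$ points, the ordering heuristic $H=N_2|\mathrm{Stab}|$ with cancellation, and the $280$-bit compatibility test. This is the paper's own proof, correctly reconstructed.
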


For the 30 cases, we collect the main data in Table~\ref{tab:tree}.
We give the total run time (in core-days), run time per instance
of the case (the number of instances is $N_3$ in 
Table~\ref{tab:point2}), and the maximum number of points completed
in the search. The total hardware run-time was 
approximately 250 core-years, but since in some parts of the 
search virtual cores were used, the times per case given in  Table~\ref{tab:tree}
sum to rather more than this. 

The length of our computer search was at the borderline of what was
doable with the resources available to the authors. Consequently,
a lot of effort was put into both the general approach and the
specific details described in this paper. In retrospect, the authors
feel that more effort should still have been put on one part, namely
on deciding the order in which the bundle types are handled. The order
used, displayed in the beginning of Section~\ref{sect:backtrack}, 
was obtained with a
heuristic, but additional computational experiments might have led to a
different order and faster computations.

\begin{table}
\caption{Search data}
\label{tab:tree}
\begin{center}
\begin{tabular}{rrrrr}\hline
Nr & Time (d) & Time/Instance (min) & Max.\ level\\\hline
1  & 2325     &     46.0      &     6      \\
2  &13958     &    269.8      &     7      \\
3  &  217     &     12.3      &     6      \\
4  &  264     &     14.2      &     6      \\
5  &  301     &     15.5      &     6      \\
6  &  162     &     10.0      &     6      \\
7  &  100     &      8.2      &     6      \\
8  &   45     &      6.4      &     5      \\
9  &   30     &      6.2      &     4      \\
10 &          &               &            \\
11 &   88     &      8.3      &     6      \\
12 &   76     &      7.8      &     5      \\
13 & 6942     &    394.1      &     7      \\
14 & 6619     &    284.9      &     7      \\
15 &  302     &     17.4      &     6      \\
16 &          &               &            \\
17 &    4     &      6.4      &     3      \\
18 &   34     &      7.3      &     5      \\
19 &42822     &    213.6      &     7      \\
20 &78792     &    644.2      &     8      \\
21 &   67     &      7.7      &     6      \\
22 &          &               &            \\
23 &  271     &      7.5      &     5      \\
24 &15988     &    253.2      &     7      \\
25 &   86     &      6.1      &     5      \\
26 &   50     &      7.0      &     6      \\
27 &          &               &            \\
28 &    6     &      6.3      &     4      \\
29 &    1     &      5.9      &     2      \\
30 &          &               &            \\
31 &   96     &      6.7      &     5      \\
32 &   18     &      6.2      &     5      \\
33 &   10     &      5.5      &     5      \\
34 &          &               &            \\
35 &    1     &      5.8      &     3      \\
36 &   66     &     82.3      &     6      \\\hline
%Total &  ?????      &              &            \\\hline
\end{tabular}
\end{center}
\end{table}

Correctness is one of the major issues of computer searches \cite{L,Roy}. There
are effective validation methods whenever the solution set for 
an instance of a classification problem is nonempty \cite[Sect.\ 10.2]{KO},
which can be applied to the initial phases of this search as
we have seen. However, for the final phase, which produces no solutions, the
situtation is different. Also notice that the idea of considering
subcases to avoid redundant computations, which was crucial for the
success of the work, means that the following argument \emph{cannot} be used:
if something exists, then it can been found in many ways.

Two measures were taken to minimize the probability of error. 
A computing cluster with error-correcting code (ECC) memory was used
to minimize the probability of hardware error. Moreover, the results of 
a different, basic (and therefore slow) implementation of the final 
backtrack search were compared with the main implementation for a part
of the search tree (comparing very specific details). 

Having answered the existence question of a McLaughlin geometry 
in the negative, one may continue 
and ask for the largest number of 5-cliques (candidate lines) in the McLaughlin graph
such that no two distinct such cliques intersect in two points. Mathon \cite{M97}
found such packings of size 1120, on which one could try to improve.
Finding the exact answer for this problem does not seem to be within
reach.

\section*{Appendix}

In this Appendix we present a \textsf{GAP/GRAPE} logfile specifying the
construction of our standard fixed copy of the McLaughlin graph $\G$,
used in all our computations.
The group $\Gr$, obtained from the \textsf{GAP} library of primitive
groups and given here by explicit generators, is our standard copy 
of the automorphism group of $\G$. The vertex-set of $\G$ is 
$\{1,\ldots,275\}$, and the edge-set is the $\Gr$-orbit of $\{1,2\}$.

Checks are included showing $\Gr$ to have the right order,
$\G$ to be a strongly regular graph with the correct parameters,
$\Gr=\textrm{Aut}(\G)$, and that the maximal cliques   
of $\G$ (maximal with respect to containment)  are 
precisely the cliques of size 5, of which there is a single $\Gr$-orbit, 
which has size 15400.
 
\bigskip

\begin{footnotesize}

\noindent\verb+gap> LoadPackage("grape");+

\noindent\verb+-----------------------------------------------------------------------------+

\noindent\verb+Loading  GRAPE 4.6.1 (GRaph Algorithms using PErmutation groups)+

\noindent\verb+by Leonard H. Soicher (+\verb+http://www.maths.qmul.ac.uk/~leonard/+\verb+).+

\noindent\verb+Homepage: +\verb+http://www.maths.qmul.ac.uk/~leonard/grape/+

\noindent\verb+-----------------------------------------------------------------------------+

\noindent\verb+true+
\vspace{-10pt}
\begin{verbatim}
gap> G:=Group( [ 
>   (  1,  2)(  3,  4)(  5,  7)(  6,  8)(  9, 12)( 10, 13)( 11, 15)( 14, 19)
>     ( 16, 21)( 17, 22)( 18, 24)( 20, 27)( 23, 31)( 25, 33)( 26, 34)( 28, 37)
>     ( 29, 38)( 30, 40)( 32, 43)( 35, 47)( 36, 48)( 39, 51)( 41, 53)( 42, 54)
>     ( 44, 57)( 45, 58)( 46, 60)( 49, 63)( 50, 64)( 52, 67)( 55, 71)( 56, 72)
>     ( 59, 76)( 61, 78)( 62, 79)( 66, 82)( 68, 85)( 69, 86)( 70, 88)( 73, 92)
>     ( 75, 94)( 77, 97)( 80,101)( 81,102)( 83,105)( 84,106)( 87,110)( 89,112)
>     ( 90,113)( 91,115)( 93,118)( 95,120)( 96,121)( 98,123)( 99,109)(100,125)
>     (103,129)(104,130)(107,133)(108,134)(111,137)(114,140)(116,142)(117,128)
>     (119,144)(122,147)(124,149)(126,139)(131,153)(132,154)(135,158)(136,159)
>     (138,161)(141,164)(143,167)(145,170)(146,172)(148,175)(150,177)(151,178)
>     (152,179)(157,183)(160,187)(162,189)(163,190)(165,168)(166,193)(169,195)
>     (171,196)(174,197)(176,192)(180,204)(181,201)(182,206)(188,209)(191,213)
>     (194,216)(198,221)(199,222)(200,208)(202,225)(203,227)(205,230)(207,232)
>     (210,218)(211,233)(212,235)(215,236)(217,238)(219,240)(220,241)(223,244)
>     (224,245)(226,247)(228,249)(229,250)(231,252)(234,255)(237,257)(239,242)
>     (243,259)(246,261)(248,264)(253,267)(254,268)(256,269)(258,266)(260,270)
>     (262,271)(263,265)(272,274)(273,275), 
>   (  1,  3,  5)(  4,  6,  9)(  7, 10, 14)(  8, 11, 16)( 12, 17, 23)
>     ( 13, 18, 25)( 15, 20, 28)( 19, 26, 35)( 21, 29, 39)( 22, 30, 41)
>     ( 24, 32, 44)( 27, 36, 49)( 31, 42, 55)( 33, 45, 59)( 34, 46, 61)
>     ( 38, 50, 65)( 40, 52, 68)( 43, 56, 73)( 48, 62, 80)( 51, 66, 83)
>     ( 53, 69, 87)( 54, 70, 89)( 57, 74, 93)( 58, 75, 95)( 60, 77, 98)
>     ( 63, 81,103)( 64, 76, 96)( 67, 84,107)( 71, 90,114)( 72, 91,116)
>     ( 78, 99,124)( 79,100,126)( 82,104,131)( 85,108,112)( 86,109,135)
>     ( 88,111,138)( 92,117,123)( 94,119,125)( 97,122,148)(101,127,150)
>     (102,128,151)(106,132,155)(110,136,160)(113,139,162)(115,141,165)
>     (118,143,168)(120,145,171)(121,146,173)(129,144,169)(130,152,180)
>     (133,156,182)(134,157,184)(140,163,191)(142,166,172)(147,174,198)
>     (149,176,200)(153,181,205)(158,185,207)(159,186,170)(161,188,210)
>     (164,192,214)(167,194,217)(175,199,223)(177,201,224)(178,202,226)
>     (179,203,228)(183,193,215)(187,208,209)(189,211,234)(190,212,221)
>     (195,218,239)(196,219,233)(197,220,242)(204,229,251)(206,231,253)
>     (216,237,255)(222,243,260)(225,246,262)(227,248,257)(230,238,258)
>     (232,254,269)(235,256,267)(240,249,265)(247,263,261)(250,264,268)
>     (252,266,272)(270,271,273) ]);;
gap> gamma:=EdgeOrbitsGraph(G,[1,2]);;
gap> # Now  gamma  is our standard copy of the McLaughlin graph, 
gap> # with  gamma.group  set to  G.  We do some checks:
gap> Size(G);
1796256000
gap> GlobalParameters(gamma);
[ [ 0, 0, 112 ], [ 1, 30, 81 ], [ 56, 56, 0 ] ]
gap> G=AutomorphismGroup(gamma); 
true
gap> G=gamma.group;
true
gap> # We now determine the maximal cliques of  gamma,  and that these 
gap> # are precisely the cliques of size 5, of which there is a single 
gap> # G-orbit, which has size 15400. 
gap> K:=CompleteSubgraphs(gamma,-1,2); 
[ [ 1, 2, 17, 45, 193 ] ]
gap> # Now  K  is a set of G-orbit representatives of the maximal 
gap> # cliques of  gamma. 
gap> cliques:=Set(Orbit(G,K[1],OnSets));; 
gap> Length(cliques);
15400
\end{verbatim}
\end{footnotesize}

\section*{Acknowledgements}

The first author was supported in part by the Academy of Finland,
Project No. 289002. We thank Frank De~Clerck 
for making the second author aware, about 25 years ago, of the interesting 
and challenging problem of the existence of a McLaughlin geometry. 
Andries Brouwer's web pages with extensive data on many interesting
graphs, especially strongly regular 
graphs, were invaluable in the development of the algorithms and
heuristics.

\end{document}